\newtheorem{theorem}{Theorem}[section]
\newtheorem{lemma}[theorem]{Lemma}
\newtheorem{conj}[theorem]{Conjecture}
\newtheorem{prop}[theorem]{Proposition}
\theoremstyle{definition}
\newtheorem{defn}[theorem]{Definition}
\newtheorem{rmk}[theorem]{Remark}
\numberwithin{equation}{theorem}
\newcommand{\AAA}{\mathbb{A}}
\newcommand{\CC}{\mathbb{C}}
\newcommand{\FF}{\mathbb{F}}
\newcommand{\QQ}{\mathbb{Q}}
\newcommand{\ZZ}{\mathbb{Z}}
\newcommand{\calD}{\mathcal{D}}
\newcommand{\calG}{\mathcal{G}}
\newcommand{\calL}{\mathcal{L}}
\newcommand{\calM}{\mathcal{M}}
\newcommand{\calO}{\mathcal{O}}
\newcommand{\calY}{\mathcal{Y}}
\newcommand{\gothc}{\mathfrak{c}}
\newcommand{\gothl}{\mathfrak{l}}
\newcommand{\gothm}{\mathfrak{m}}
\newcommand{\gothp}{\mathfrak{p}}
\newcommand{\gothq}{\mathfrak{q}}
\DeclareMathOperator{\Art}{Art}
\DeclareMathOperator{\diag}{diag}
\DeclareMathOperator{\End}{End}
\DeclareMathOperator{\Frob}{Frob}
\DeclareMathOperator{\Fss}{F-ss}
\DeclareMathOperator{\GL}{GL}
\DeclareMathOperator{\Hom}{Hom}
\DeclareMathOperator{\image}{image}
\DeclareMathOperator{\Irr}{Irr}
\DeclareMathOperator{\rec}{rec}
\DeclareMathOperator{\Res}{Res}
\DeclareMathOperator{\SL}{SL}
\DeclareMathOperator{\Ss}{ss}
\DeclareMathOperator{\WD}{WD}
\DeclareMathOperator{\WDrep}{WDrep}
\title{An Ordinary Rank-Two Case of Local-Global Compatibility for Automorphic Representations of Arbitrary Weight Over CM Fields}
\author{Yuji Yang}
\date{}
\begin{document}
\maketitle
\begin{abstract}
We prove a rank-two potential automorphy theorem for mod $l$ representations satisfying an ordinary condition. Combined with an ordinary automorphy lifting theorem from \cite{ACC+18}, we prove a rank-two, $p\ne l$ case of local-global compatibility for regular algebraic cuspidal automorphic representations of arbitrary weight over CM fields that is $\iota$-ordinary for some $\iota:\overline{\QQ}_l\xrightarrow{\sim}\CC$.
\end{abstract}
\section{Introduction}

The goal of this work is to prove a new case of local-global compatibility for automorphic Galois representations, which we will introduce in a general sense below.

Let $F$ be a number field. Let $\pi=\otimes'_v\pi_v$ be an algebraic cuspidal automorphic representation of $\GL_n(\AAA_F)$, where $\pi_v$ is an irreducible representation of $\GL_n(F_v)$ for each place $v$ in $F$. Let $l$ be a rational prime and fix an isomorphism $\iota:\overline{\QQ}_l\xrightarrow{\sim}\CC$. For each $v$ the local Langlands correspondence gives a bijection
$$\rec_{F_v}:\Irr(\GL_n(F_v))\to\WDrep_n(W_{F_v}),$$
normalized as in \cite{HT01}, where $\Irr(\GL_n(F_v))$ is the set of isomorphism classes of irreducible smooth admissible representations of $\GL_n(F_v)$ over $\CC$, and $\WDrep_n(W_{F_v})$ is the set of isomorphism classes of $n$-dimensional Frobenius semisimple Weil--Deligne representations of $W_{F_v}$ over $\CC$. Recall that a Weil--Deligne representation is a pair $(r,N)$, where $r$ is a representation of the Weil group $W_{F_v}$ of $F_v$ and $N$ is a nilpotent endomorphism of the representation space that intertwines with $r$. We can apply the local Langlands correspondence map to a twist of $\pi_v$ to get $$\rec_{F_v}(\pi_v\otimes|\det|^{\frac{1-n}{2}}),$$ 
a Weil--Deligne representation of $W_{F_v}$ over $\CC$.

Conjecturally there exists a continuous semisimple Galois representation $r_\iota(\pi):G_F\to \GL_n(\overline{\QQ}_l)$ attached to $\pi$ and $\iota$. This restricts to $G_{F_v}$, and if $v\nmid l$, by Grothendieck's $l$-adic monodromy theorem the restriction determines a Weil--Deligne representation $\WD(r_\iota(\pi)|_{G_{F_v}})$. We can take its Frobenius semisimplification (recall that the Frobenius semisimplification $(r,N)^{\Fss}=(r^{\Ss},N)$) and then base change to $\CC$ to get $$\WD(r_\iota(\pi)|_{G_{F_v}})^{\Fss}\otimes_\iota\CC,$$
another Weil--Deligne representation of $W_{F_v}$ over $\CC$.

The local-global compatibility conjecture says that these two objects are isomorphic:
\begin{conj}[Local-global compatibility]
For all finite place $v$ of $F$, we have
$$\WD(r_\iota(\pi)|_{G_{F_v}})^{\Fss}\otimes_\iota\CC\cong\rec_{F_v}(\pi_v\otimes|\det|^{\frac{1-n}{2}}).$$
\end{conj}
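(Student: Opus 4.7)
The conjecture as stated is open in general, so the plan is to establish the case announced in the abstract: $n=2$ with $F$ a CM field, $\pi$ regular algebraic cuspidal and $\iota$-ordinary, and $v \nmid l$. The approach follows the Calegari--Geraghty / ACC$+$ paradigm, combining a rank-two potential automorphy theorem for ordinary mod $l$ representations with the ordinary automorphy lifting theorem of \cite{ACC+18}. The benefit of the ordinary hypothesis is that it removes the Fontaine--Laffaille / small-weight restrictions that would otherwise appear on the $l$-adic side, so that the statement holds for $\pi$ of arbitrary weight.

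First I would put the Galois-theoretic setup in place. Since $\pi$ is regular algebraic cuspidal on $\GL_2(\AAA_F)$, the attached representation $r=r_\iota(\pi):G_F \to \GL_2(\overline{\QQ}_l)$ exists, and the $\iota$-ordinary hypothesis forces $r|_{G_{F_\lambda}}$ to be ordinary at each $\lambda \mid l$. Fix the finite place $v \nmid l$ where we wish to compute $\WD(r|_{G_{F_v}})^{\Fss}$, and pass, via solvable base change in the sense of Arthur--Clozel, to a CM extension $F'/F$ in which the residual representation $\bar r|_{G_{F'}}$ has adequate image, the ordinary condition at $l$ is preserved, and the Weil--Deligne parameter at a place of $F'$ over $v$ remains a faithful restriction of $\rec_{F_v}(\pi_v \otimes |\det|^{-1/2})$. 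Since $\rec$ commutes with cyclic base change and both sides of the desired isomorphism transform in the same way under solvable descent, nothing is lost in this reduction.

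Next comes the heart of the argument. The goal is to find a further CM extension $F''/F'$ and a cuspidal automorphic $\pi''$ of $\GL_2(\AAA_{F''})$ such that $r_\iota(\pi'')$ is ordinary at $l$ and residually isomorphic to $r|_{G_{F''}}$. In rank two this is attempted geometrically, in the style of Taylor's potential modularity: produce a Hilbert--Blumenthal abelian variety (or a point on a suitable unitary PEL Shimura variety) over $F''$ whose mod $l$ Galois representation realizes $\bar r|_{G_{F''}}$, whose $l$-adic Tate module is ordinary at $l$, and whose local behaviour at $v$ is compatible with the prescribed local condition. A Moret-Bailly--Khare--Wintenberger style moving lemma on the relevant moduli space, cut out by the ordinary locus at $l$ and a local chart at $v$, is what delivers such a point over a CM solvable $F''$. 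Once $\bar r|_{G_{F''}}$ is thereby shown to be automorphic, the ordinary automorphy lifting theorem of \cite{ACC+18} promotes this to automorphy of $r|_{G_{F''}}$ itself, local-global compatibility for rank-two Hilbert-type automorphic forms (known classically at places prime to $l$) gives the isomorphism at places of $F''$ above $v$, and cyclic descent transfers the equality back down to $v$ in $F$.

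The main obstacle I expect is the Moret-Bailly-type construction in the middle step: simultaneously prescribing the mod $l$ representation $\bar r|_{G_{F''}}$, the ordinary condition at $l$, and the local behaviour at $v$, while ensuring that $F''/F$ is solvable and CM and that the residual image remains adequate for Taylor--Wiles patching. The ordinary stratum on the relevant Shimura variety is open but delicate, and avoiding small-image or vexing phenomena for $\bar r$ over $F''$ is nontrivial; indeed this is exactly the geometric content one must add beyond \cite{ACC+18} to get the theorem off the ground. Once that potentially automorphic base point is secured, lifting via \cite{ACC+18}, matching Weil--Deligne parameters at $v$, and cyclic descent follow the patterns that are by now standard in the Taylor--Wiles literature.
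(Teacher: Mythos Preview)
The proposal has a genuine circularity in the final two steps. After the automorphy lifting theorem produces $\Pi$ on $\GL_2(\AAA_{F''})$ with $r_\iota(\Pi)\cong r_\iota(\pi)|_{G_{F''}}$, you invoke ``local-global compatibility for rank-two Hilbert-type automorphic forms (known classically)'' at the places of $F''$ above $v$. But $F''$ is CM, not totally real, and $\Pi$ is a not-necessarily-polarizable cuspidal automorphic representation of $\GL_2$ over a CM field; local-global compatibility for such $\Pi$ at places where $\Pi$ is ramified is exactly the open problem the paper is addressing. You cannot appeal to it. The subsequent ``cyclic descent'' step is also unavailable: the extension $F''/F$ produced by a Moret--Bailly argument is essentially never solvable, so there is no solvable base change descent back to $F$.

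The paper avoids both traps by a different logical structure. First, by \cite{Var14} the desired isomorphism already holds up to semisimplification, so the only content is the monodromy $N$; and $N$ can only be nonzero when $\pi_v$ is a twist of Steinberg. One then argues by \emph{contradiction}: assume $N=0$, so $r_\iota(\pi)|_{G_{F_v}}\cong \chi\oplus\chi\epsilon_l$. The potential automorphy theorem is applied not to match the local behaviour at $v$, but to produce $\pi_1$ over $F_1$ which is \emph{unramified} at the places above $v$ (this is the extra datum in your Moret--Bailly step that you did not specify). After lifting to $\Pi$ with $r_\iota(\Pi)\cong r_\iota(\pi)|_{G_{F_1}}$, one only needs local-global compatibility for $\Pi$ at the \emph{unramified} places $v_1\mid v$, and that is genuinely known from \cite{HLTT16} and \cite{Var14}. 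The Satake parameters of $\Pi_{v_1}$ would then have ratio $q_{v_1}$, contradicting genericity of $\Pi$. No descent to $F$ is ever required; the contradiction lives entirely over $F_1$. Your geometric input (ordinary HBAV via Moret--Bailly) is essentially correct, but it must be aimed at producing an unramified-at-$v$ witness rather than one matching $\pi_v$, and the global logic must be the contradiction argument just described.
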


\begin{rmk}
We only defined $\WD$ when $v\nmid l$. We call it the $p\ne l$ case, assuming that $p$ is the residue characteristic of $F_v$. We can also define a Weil--Deligne representation in the $p=l$ case.
\end{rmk}

From now on we will focus on the case where $F$ is a CM field and $p\ne l$. Fix a rational prime $l$ and an isomorphism $\iota: \overline{\QQ}_l\xrightarrow{\sim}\CC$ as above. Let $\pi$ be a regular algebraic cuspidal automorphic representation of $\GL_n(\AAA_F)$. If $\pi$ is polarizable, local-global compatibility has been proved by Caraiani \cite{Car12} (and by Taylor–Yoshida in less generality \cite{TY07}). For non-polarizable $\pi$, we mention the following two remarkable results. In \cite{HLTT16}, Harris--Lan--Taylor--Thorne showed the existence of $r_\iota(\pi)$. Moreover, if the other prime $p$ satisfies some unramified conditions, then local-global compatibility holds at any finite place $v\mid p$ in $F$. In \cite{Var14}, Varma showed that for any finite place $v\nmid l$, local-global compatibility holds up to semisimplification (recall that the semisimplification $(r,N)^{\Ss}=(r^{\Ss},0)$). The question is then how to deal with the monodromy.

The work of Allen--Newton \cite[Theorem~4.1]{AN19} addresses the question when $\pi$ is of weight $0$ in the $\GL_2$ case. They prove that there is set of primes $l$ of Dirichlet density one such that local-global compatibility holds for all finite places $v\nmid l$ in $F$. The strategy of proof is to combine an automorphy lifting theorem with Taylor's potential automorphy method. The idea to approach the local-global compatibility conjecture using the automorphy lifting theorems is inspired by \cite{Luu15}. The proof of \cite[Theorem~4.1]{AN19} uses the Fontaine--Laffaille automorphy lifting theorem \cite[Theorem~6.1.1]{ACC+18}, which does not allow a change of weights, and Taylor's potential automorphy method gives an input of weight $0$, so they together require $\pi$ to be of weight $0$. However, the authors mentioned that the weight $0$ condition might be removed if we use the ordinary automorphy lifting theorem \cite[Theorem~6.1.2]{ACC+18} instead, at the cost of imposing some ordinary assumptions on $\pi$ at $l$. Here we investigate this question. We will prove the following result.

\begin{theorem}
Let $F$ be a CM field and let $\pi$ be a regular algebraic cuspidal automorphic representation of $\GL_2(\AAA_F)$. Suppose that 
\begin{enumerate}
    \item $l$ is an odd prime,
    \item $\pi$ is $\iota$-ordinary for some $\iota:\overline{\QQ}_l\cong\CC$,
    \item $\overline{r}_{\iota}(\pi)$ is decomposed generic, $\overline{r}_{\iota}(\pi)(G_{F(\zeta_l)})$ is enormous, and there is a $\sigma\in G_F-G_{F(\zeta_l)}$ such that $\overline{r}_{\iota}(\pi)(\sigma)$ is a scalar.
\end{enumerate}
Then for any finite place $v\nmid l$ in $F$, we have
$$\WD(r_{\iota}(\pi)|_{G_{F_v}})^{\Fss}\otimes_\iota\CC\cong\rec_{F_v}(\pi_v\otimes|\det|^{-1/2}).$$
\end{theorem}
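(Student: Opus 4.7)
The plan is to reduce local-global compatibility at a given $v \nmid l$ to the analogous statement after restriction to the absolute Galois group of a suitable CM extension $F'/F$, and then to establish the restricted statement by combining a rank-two ordinary potential automorphy theorem with the ordinary automorphy lifting theorem of \cite{ACC+18}. The result of \cite{Var14} already supplies $\WD(r_\iota(\pi)|_{G_{F_v}})^{\Ss}\otimes_\iota\CC \cong \rec_{F_v}(\pi_v\otimes|\det|^{-1/2})^{\Ss}$, so only the monodromy operator $N$ remains to be identified. Restriction of Weil--Deligne representations from $W_{F_v}$ to $W_{F'_w}$ for any $w \mid v$ preserves both the underlying space and the nilpotent operator $N$, and local Langlands is compatible with local base change in the sense that $\rec_{F'_w}(\pi_{F',w}\otimes|\det|^{-1/2})$ has the same nilpotent part as $\rec_{F_v}(\pi_v\otimes|\det|^{-1/2})$. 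It therefore suffices to verify compatibility for the base change $\pi_{F'}$ at a single $w \mid v$, for a CM extension $F'/F$ that we are free to choose.

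The main technical input would be an ordinary analogue of \cite[Theorem~4.1]{AN19}: under the hypotheses on $\overline{r}_\iota(\pi)$, produce a soluble Galois CM extension $F'/F$, linearly disjoint over $F$ from a suitable auxiliary extension, together with a Hilbert modular newform $f$ over the maximal totally real subfield of $F'$, such that $\overline{r}_\iota(f)|_{G_{F'}} \cong \overline{r}_\iota(\pi)|_{G_{F'}}$ and $f$ is ordinary at all places above $l$. I would prove this via a Moret--Bailly type construction adapted from \cite{AN19}: one sets up a moduli space of elliptic curves $E$ over $F'$ equipped with an isomorphism $E[l] \cong \overline{r}_\iota(\pi)|_{G_{F'}}$, with an auxiliary small-prime level structure whose residual representation is known to be modular, and with ordinary reduction at the primes above $l$. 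Moret--Bailly then produces a point over a sufficiently large CM extension, modularity of elliptic curves over totally real fields (after descent along a soluble extension, possibly via a $3$--$5$ switch) yields the Hilbert modular form $f$, and the ordinary reduction of $E$ at $l$ guarantees the ordinarity of $f$ at $l$.

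With residual ordinary automorphy in hand, I would apply the ordinary automorphy lifting theorem \cite[Theorem~6.1.2]{ACC+18} to $r_\iota(\pi)|_{G_{F'}}$. Its hypotheses on the residual image persist because $F'$ is linearly disjoint over $F$ from the field cut out by $\overline{r}_\iota(\pi)$ and from $F(\zeta_l)$, and the $\iota$-ordinary hypothesis on $\pi$ supplies the required local ordinarity condition at primes of $F'$ above $l$. This places $r_\iota(\pi)|_{G_{F'}}$ in the same ordinary Hida family as $r_\iota(f)|_{G_{F'}}$; within such a family, the Weil--Deligne parameters at a place $w \nmid l$ have constant nilpotent part, so classical local-global compatibility for the Hilbert modular form $f$ pins down the monodromy of $r_\iota(\pi)|_{G_{F'_w}}$ as the nilpotent part of $\rec_{F'_w}(\pi_{F',w}\otimes|\det|^{-1/2})$. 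Together with Varma's semisimple matching and the restriction principle of the first paragraph, this completes the proof. The principal obstacle is the potential automorphy step: one must arrange the Moret--Bailly construction to simultaneously respect the ordinarity condition at $l$, the modularity condition at the auxiliary prime, and the preservation of the decomposed-generic and enormous-image hypotheses on restriction to $F'$, each of which constrains the moduli problem in a delicate way.
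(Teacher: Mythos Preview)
Your overall architecture---reduce to the monodromy via \cite{Var14}, produce an ordinary potentially automorphic companion via a Moret--Bailly argument, then apply \cite[Theorem~6.1.2]{ACC+18}---matches the paper's. (One small correction: since $\overline{r}_\iota(\pi)$ takes values in $\GL_2(k)$ for a possibly nontrivial extension $k/\FF_l$, you cannot realize it as $E[l]$ for an elliptic curve; one must use $M$-Hilbert--Blumenthal abelian varieties with $\calO_{M,\gothl}$ having residue field $k$, as the paper does.) The substantive gap is your endgame. You assert that after lifting, $r_\iota(\pi)|_{G_{F'}}$ and $r_\iota(f)|_{G_{F'}}$ lie in a common ordinary Hida family in which ``the Weil--Deligne parameters at a place $w\nmid l$ have constant nilpotent part''. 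This fails in exactly the situation at hand: your own construction produces $f$ \emph{unramified} above $v$, hence with zero monodromy there, while $\pi_v$ is special with expected nonzero monodromy---so two members of the family already have different nilpotent operators. Only an upper-semicontinuity of $N$ holds in such families, and it points the wrong way. Moreover, the output $\Pi$ of the lifting theorem is just another cuspidal automorphic representation of $\GL_2(\AAA_{F'})$, so demanding local-global compatibility for $\Pi$ at $w$ is the same problem you started with, unless you can say something extra about $\Pi_w$.

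The paper supplies that extra information via a contradiction argument. Assume $\pi_v$ is (after a solvable base change) an unramified twist of Steinberg but $N=0$ on the Galois side; then \cite{Var14} forces $r_\iota(\pi)|_{G_{F_v}}\cong\chi\oplus\chi\epsilon_l$, in particular unramified. Now construct the $\iota$-ordinary companion $\pi_1$ to be \emph{unramified above $v$}, and apply \cite[Theorem~6.1.2]{ACC+18}: because both $r_\iota(\pi)|_{G_{F_1}}$ and $\pi_1$ are unramified there, the resulting $\Pi$ can be taken with $\Pi_{v_1}$ unramified for every $v_1\mid v$. For unramified $\Pi_{v_1}$ local-global compatibility is already known from \cite{HLTT16,Var14}, and it forces the Satake parameters of $\Pi_{v_1}$ to have ratio $q_{v_1}$, contradicting the genericity of the local components of a cuspidal automorphic representation. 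That contradiction, not constancy of $N$ in a family, is what pins down the monodromy.
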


The idea of proof is similar to that of \cite[Theorem~4.1]{AN19}. By the main result of \cite{Var14}, it suffices to show that when $\pi_v$ is special (i.e. a twist of Steinberg), $r_{\iota}(\pi)|_{G_{F_v}}$ has nontrivial monodromy. Assume the monodromy is trivial. After a base change we may assume that $\pi_v$ is an unramified twist of Steinberg. We then want to find an automorphic representation $\pi_1$ that is unramified at $v$, such that $\overline{r}_{\iota}(\pi_1)\cong\overline{r}_{\iota}(\pi)$ after some restriction. This step is done by a potential automorphy theorem \cite[Theorem~3.9]{AN19}. Then the automorphy lifting theorem will give an automorphic representation $\Pi$ with $r_{\iota}(\Pi)\cong r_{\iota}(\pi)$ after some restriction, from which we can deduce a contradiction with the unramified local-global compatibility known from \cite{HLTT16,Var14}.

However, since we use the ordinary automorphy lifting theorem instead of the Fontaine--Laffaille one, there are some major differences. First, the ordinary automorphy lifting theorem does not specify the weights of the input automorphic representations $\pi$ and $\pi_1$, so we are able to handle regular algebraic cuspidal automorphic representations of arbitrary weight. Also, we need to prove an ordinary version of potential automorphy theorem (Theorem \ref{g3.9} proved below) to construct the automorphic representation $\pi_1$ which in some sense inherits ordinariness. More precisely, we impose an ordinary condition (Definition \ref{GoodOrdinary} below) on the Galois representation and we show that the automorphic representation we construct is $\iota$-ordinary. Finally, since we are not in the Fontaine--Laffaille case, we allow the prime $l$ to ramify in $F$.

\hspace*{\fill} \\
\textbf{Notations.} Let $F$ be a CM field. Let $l$ be an odd prime and let $v$ be a prime of $F$ over $l$. Let $k/\FF_l$ be a finite field. Let $\calO$ be a discrete valuation ring finite over $W(k)$ with residue field $k$. Let $\calD_{\calO/\ZZ_l}$ denote the different. Let $\gothl$ be a uniformiser in $\calO$. Let $\epsilon_l$ be the $l$-adic cyclotomic character. We normalize our Hodge--Tate weights such that $\epsilon_l$ has all labelled Hodge--Tate weights equal to $-1$.

Let $M$ be a totally real field. We recall the definition of $M$-Hilbert--Blumenthal abelian varieties ($M$-HBAV in short). Let $S$ be a scheme and let $A$ be an abelian scheme over $S$ equipped with a given embedding of rings (real multiplication) $\iota:\calO_M\hookrightarrow\End(A/S)$. Let $\calM_A$ be the polarization module of $A$, i.e., the module of  $\calO_M$-linear symmetric homomorphisms $A\to A^{\vee}$, and let $\calM_A^+$ be the positive cone of polarizations. An $M$-HBAV is an abelian scheme with real multiplication $(A,\iota)$ such that the natural map $A\otimes_{\calO_M}\calM_A\to A^{\vee}$ is an isomorphism. Let $\gothc$ be a nonzero fractional ideal of $M$ and let $\gothc^+$ be the totally positive elements in $\gothc$. A $\gothc$-polarization of $A$ is an isomorphism $j:\gothc\xrightarrow{\sim}\calM_A$ of $\calO_M$-modules such that $j(\gothc^+)=\calM_A^+$. See \cite{DP94} for more details.

By a divisible $\calO$-module over a scheme $S$ we mean an $l$-divisible group $\calG/S$ with a ring homomorphism $\calO\to\End_S\calG$. Let $K$ be a local field with residue characteristic $l$. We say a divisible $\calO$-module $\calG/\calO_K$ is ordinary if there is an embedding $\mu_{l^\infty}\otimes_{\ZZ}\calO\hookrightarrow\calG_{\calO_K^{\text{nr}}}$.


Finally we recall the definition of a Galois representation being ordinary of some weight. Let $\ZZ^n_+$ be the set of $n$-tuples with coordinates in decreasing order. Take a continuous representation $\rho:G_F\to\GL_n(\overline{\QQ}_l)$ and let $\lambda\in(\ZZ^n_+)^{\Hom(F,\overline{\QQ}_l)}$. We say that $\rho$ is ordinary of weight $\lambda$ if for each $v\mid l$ in $F$
$$\rho|_{G_{F_v}}\cong\left(\begin{array}{cccc}
    \psi_1 & * & * & * \\
    0 & \psi_2 & * & * \\
    \vdots & \ddots & \ddots & * \\
    0 & \cdots & 0 & \psi_n
\end{array}\right)$$
where for each $1\le i\le n$, $\psi_i:G_{F_v}\to \overline{\QQ}_l^\times$ is a continuous character such that $$\psi_i(\Art_{F_v}(\sigma))=\prod_{\tau:F_v\hookrightarrow\overline{\QQ}_l}\tau(\sigma)^{-(\lambda_{\tau,n-i+1}+i-1)}$$ for all $\sigma$ in some open subgroup of $\calO_{F_v}^\times$.

\section{An Important Lemma}
In this section we prove a lemma constructing certain $\gothl$-divisible groups that is important for our potential automorphy result (Theorem \ref{g3.9}). We first make the following definition to simplify our statement. 

\begin{defn}\label{GoodOrdinary}
Let $\overline{r}:G_{F_v}\to\GL_2(k)$ be a continuous representation. We say $\overline{r}$ is \textbf{good ordinary} if $$\overline{r}\cong\left(\begin{array}{cc}
    \overline{\psi}\overline{\epsilon}_l & \overline{c} \\
    0 & \overline{\psi}^{-1}
\end{array}\right)$$ for some unramified character $\overline{\psi}$ and extension class $\overline{c}$, such that either $\overline{\psi}^2\ne1$ or $\overline{\psi}^2=1$ and $\overline{c}$ is peu ramifi\'e (\cite[\S2.4]{Ser87}).
\end{defn}

Now we state our important lemma, which can be viewed as an ordinary version of \cite[Lemma~3.8]{AN19}. Since we allow $l$ to ramify in $F$, we use Breuil--Kisin theory instead of Fontaine--Laffaille theory.

\begin{lemma}\label{g3.8}
Let $\overline{r}:G_{F_v}\to\GL_2(k)$ be a continuous representation such that:
\begin{itemize}
    \item $\det\overline{r}=\overline{\epsilon}_l$,
    \item there is a crystalline lift $r:G_{F_v}\to\GL_2(\calO)$ with labelled Hodge--Tate weights all equal to \{$-1,0$\}.
\end{itemize}
Let $V_{\overline{r}}$ be the underlying \'etale $k$-vector space scheme over $F_v$ of $\overline{r}$. Let $\gothl$ be a uniformiser in $\calO$. Then there exists a divisible $\calO$-module $\calG$ over $\calO_{F_v}$ of height $2[\calO:\ZZ_l]$, equipped with an $\calO$-linear symmetric isomorphism $\lambda:\calG\cong\calG^\vee\otimes\calD_{\calO/\ZZ_l}$, and an isomorphism $i:V_{\overline{r}}\cong\calG[\gothl]_{F_v}$ such that $i^\vee\circ\lambda[\gothl]_{F_v}\circ i$ is the isomorphism induced by the standard symplectic pairing on $V_{\overline{r}}$. 

Moreover, if $\overline{r}$ is good ordinary, then $\calG$ can be chosen to be ordinary.
\end{lemma}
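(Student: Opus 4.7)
The plan is to pass from the crystalline lift $r$ to a Barsotti--Tate group via the Breuil--Kisin classification in its ramified-base version. By Kisin's theorem (extended to ramified base by Liu and Kim), the category of Barsotti--Tate groups over $\calO_{F_v}$ is anti-equivalent to that of effective Breuil--Kisin modules of height one. Since $r$ is crystalline with all labelled Hodge--Tate weights in $\{-1,0\}$, the associated Breuil--Kisin module $\gothM_r$ is effective of height one, so one obtains a Barsotti--Tate group $\calG_0$ over $\calO_{F_v}$ of $\ZZ_l$-height $2[\calO:\ZZ_l]$ whose generic-fibre Tate module recovers the lattice underlying $r$. The $\calO$-action on $r$ transports through the equivalence to an $\calO$-action on $\calG_0$, producing the divisible $\calO$-module $\calG$, and reducing modulo $\gothl$ then passing to the generic fibre produces $i:V_{\overline{r}}\xrightarrow{\sim}\calG[\gothl]_{F_v}$.

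For the polarization, the condition $\det r=\epsilon_l$ supplies a $G_{F_v}$-equivariant isomorphism $r\xrightarrow{\sim}r^\vee\otimes\epsilon_l$, equivalently a perfect alternating $\calO$-bilinear pairing $r\times r\to\calO(1)$. Transporting this pairing through the Breuil--Kisin and Barsotti--Tate equivalences yields the symmetric $\calO$-linear isomorphism $\lambda:\calG\xrightarrow{\sim}\calG^\vee\otimes\calD_{\calO/\ZZ_l}$; the different $\calD_{\calO/\ZZ_l}$ appears in order to convert the naive $\ZZ_l$-Cartier dual produced by the Barsotti--Tate functor into the $\calO$-Cartier dual that is natural for a divisible $\calO$-module. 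The compatibility that $i^\vee\circ\lambda[\gothl]_{F_v}\circ i$ recovers the standard symplectic pairing is then the reduction modulo $\gothl$ of the original pairing on $r$.

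For the ordinary refinement, I would arrange to feed the construction an ordinary crystalline lift of $\overline{r}$. Concretely, lift the unramified character $\overline{\psi}$ to an unramified character $\psi:G_{F_v}\to\calO^\times$ and lift the extension class $\overline{c}\in H^1(G_{F_v},\overline{\psi}^2\overline{\epsilon}_l)$ to some $c\in H^1(G_{F_v},\psi^2\epsilon_l)$. When $\overline{\psi}^2\ne 1$ the two diagonal characters stay inertially non-isomorphic and any lift suffices; when $\overline{\psi}^2=1$ with $\overline{c}$ peu ramifié, one chooses $c$ to lie in the peu ramifié subspace of $H^1(G_{F_v},\epsilon_l)$, which is precisely the image of classes realising extensions of finite flat group schemes of multiplicative type by étale type. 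The resulting lift $r=\begin{pmatrix}\psi\epsilon_l&c\\0&\psi^{-1}\end{pmatrix}$ is ordinary, so its Breuil--Kisin module is an extension of a unit-root module by one on which $\phi$ is divisible by $E(u)$; consequently $\calG$ fits in a short exact sequence of $\calO$-divisible groups with étale quotient and multiplicative sub, hence is ordinary.

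The main obstacle I expect is paragraph two: tracking the duality carefully through the integral $p$-adic Hodge theory for ramified base, in particular locating the different $\calD_{\calO/\ZZ_l}$ in the right place and verifying that $\lambda$ is genuinely symmetric (and not merely a perfect pairing) after the transport. The remaining steps are formal consequences of Kisin's equivalence together with the classical analysis of crystalline and finite-flat extensions under the good ordinary hypothesis.
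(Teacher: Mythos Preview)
Your overall strategy---Breuil--Kisin theory to produce $\calG$, transport the $\calO$-action and the pairing, then feed in an ordinary lift in the good-ordinary case---is sound and close to the paper's. Two points are worth flagging.

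First, a small gap: you invoke ``the condition $\det r=\epsilon_l$'', but the hypotheses only give $\det\overline{r}=\overline{\epsilon}_l$. The crystalline lift $r$ with Hodge--Tate weights $\{-1,0\}$ satisfies $\det r|_{I_{F_v}}=\epsilon_l$, so $\det r$ may differ from $\epsilon_l$ by an unramified character. The paper handles this with an explicit reduction: twist $r$ by the unramified square root $\psi$ of $(\det r)^{-1}\epsilon_l$, so that $r'=r\otimes\psi$ is still a crystalline lift of $\overline{r}$ with the same Hodge--Tate weights and now $\det r'=\epsilon_l$ on the nose. You should insert this step before building the pairing.

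Second, and more substantively, the paper sidesteps precisely the ``main obstacle'' you anticipate. Rather than pushing the duality through the Breuil--Kisin equivalence and chasing where $\calD_{\calO/\ZZ_l}$ lands, the paper works directly with Tate modules: the standard symplectic $\calO$-pairing on $T=\calO^2$ gives $T\cong\Hom_\calO(T,\calO(1))$, the trace form gives $\Hom_\calO(T,\calO(1))\otimes\calD_{\calO/\ZZ_l}^{-1}\cong\Hom_{\ZZ_l}(T,\ZZ_l(1))$, and hence $T_l(\calG)\cong T_l(\calG^\vee\otimes\calD_{\calO/\ZZ_l})$. Then Tate's full-faithfulness theorem for $l$-divisible groups over $\calO_{F_v}$ upgrades this to the desired isomorphism $\lambda$ of $l$-divisible groups, with symmetry and compatibility with $i$ automatic from the construction. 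The same device is reused for the ordinary statement: once $r$ is chosen ordinary, the inertial subrepresentation $\epsilon_l\otimes_{\ZZ}\calO\hookrightarrow T_l(\calG)$ yields, again by Tate, an embedding $\mu_{l^\infty}\otimes_{\ZZ}\calO\hookrightarrow\calG_{\calO_{F_v}^{\mathrm{nr}}}$. Your route via the Breuil--Kisin module structure (unit-root quotient, $E(u)$-divisible sub) would also work, but the Tate-module argument is shorter and avoids the integral duality bookkeeping entirely. For the existence of the ordinary crystalline lift itself, the paper simply cites \cite[Lemma~6.1.6]{BLGG12} rather than building it by hand as you do; your explicit construction is fine and amounts to the same thing.
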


\begin{proof}

\textbf{Step 1: Reduction.} By assumption we have $\overline{r}:G_{F_v}\to\GL_2(k)$ such that  $\det\overline{r}=\overline{\epsilon}_l$, with a crystalline lift $r:G_{F_v}\to\GL_2(\calO)$ with all labelled Hodge--Tate weights in $\{-1,0\}$, so $\det r|_{I_{F_v}}=\epsilon_l$. Then there is an unramified character $\psi:G_{F_v}\to\calO^\times$ given by $\psi^2=(\det r)^{-1}\epsilon_l$, such that $r':=r\otimes\psi$ has $\det r'=\epsilon_l$ and it is another crystalline lift of $\overline{r}$ satisfying the assumptions. Thus we may assume without loss of generality that $\det r=\epsilon_l$.

\hspace*{\fill} \\
\textbf{Step 2: Construction.} The existence of the $l$-divisible group $\calG$ over $\calO_{F_v}$ follows from \cite[Corollary~6.2.3]{SW13}. For a constructive proof we use Breuil--Kisin theory and refer to \cite[Lemma~2.1.15 and Corollary~2.2.6]{Kis06}. Also see \cite[Lemma~11.2.10 and Theorem~12.3.2]{BC09}. From the construction \cite[Corollary~2.2.6]{Kis06} or \cite[Theorem~12.3.2]{BC09} we know that there is a natural isomorphism of lattices $r\cong T_l(\calG)$.

\hspace*{\fill} \\
\textbf{Step 3: Properties.} We finally check that $\calG$ meets all the conditions we demand. First of all, there is an obvious $\calO$-action on the representation space $\calO^2$ and by functoriality this gives the $\calO$-action on $\calG$; in other words, we get $$\calO\hookrightarrow\End\calG.$$ The height of $\calG$ is equal to the $\ZZ_l$-rank of $T_l(\calG)$, which is $2[\calO:\ZZ_l]$. 
Next, we compute that $$V_{\overline{r}}=V_{r\text{ mod }\gothl}\cong T_l(\calG)\text{ mod }\gothl=\calG[\gothl]_{F_v}(\overline{F}_v).$$
It remains to produce $\lambda$. Take $T=\calO^2$ with $G_{F_v}$-action given by $r$. We have the (Galois equivariant) standard symplectic pairing $\langle\cdot,\cdot\rangle:\calO^2\times\calO^2\to\calO(1)$, so $T\cong\Hom_\calO(T,\calO(1))$. The trace pairing from $\calO$ to $\ZZ_l$ gives $\Hom_\calO(T,\calO(1))\otimes\calD^{-1}_{\calO/\ZZ_l}\cong\Hom_{\ZZ_l}(T,\ZZ_l(1))$, and hence there is an isomorphism of Tate modules $$T_l(\calG)\cong T_l(\calG^\vee)\otimes\calD_{\calO/\ZZ_l}\cong T_l(\calG^\vee\otimes\calD_{\calO/\ZZ_l}).$$ 
Hence by the main result of \cite{Tat67} we get an isomorphism of $l$-divisible groups $\calG\cong\calG^\vee\otimes\calD_{\calO/\ZZ_l}$. The compatibility with the standard symplectic pairing on $V_{\overline{r}}$ follows by construction.

Now we let $\overline{r}$ be good ordinary. We first point out that it is a stronger assumption: obviously $\det(\overline{r})=\overline{\epsilon}_l$, and \cite[Lemma~6.1.6]{BLGG12} implies that there is a potentially crystalline lift of $\overline{r}$ with labelled Hodge--Tate weights all equal to $\{-1,0\}$. Note that we normalize the representation differently, but the lemma still holds since the proof only cares about the quotient of the diagonal characters. Moreover, by Definition \ref{GoodOrdinary}, we are in Case $1$ or Case $2$b in the proof of \cite[Lemma~6.1.6]{BLGG12}, where we have some control on the lifts of the diagonal characters. Since we further demand the character $\overline{\psi}$ to be unramified in Definition \ref{GoodOrdinary}, we can guarantee that the lifts of the characters are unramified instead of potentially unramified, and hence deduce that there is a crystalline lift of $\overline{r}$ (rather than potentially crystalline). 

Therefore it remains to show that $\calG$ can be chosen to be ordinary. By \cite[Lemma~6.1.6]{BLGG12} we know that the lift $r$ is of the form $$r\cong\left(\begin{array}{cc}
    \psi\epsilon_l & c \\
    0 & \psi^{-1}
\end{array}\right),$$ which is ordinary, and hence we know $r|_{I_{F_v}}$ has a subrepresentation $\epsilon_l\otimes_\ZZ\calO$. Note that $r\cong T_l(\calG)$ and $T_l(\mu_{l^\infty})\cong\epsilon_l$. By the main result of \cite{Tat67} we get an embedding $\mu_{l^\infty}\otimes_\ZZ\calO\hookrightarrow\calG_{\calO_{F_v}^\text{nr}}$, which means that $\calG$ is ordinary.
\end{proof}

\section{Potential Automorphy}
We first state an ordinary version of \cite[Proposition~3.6]{AN19}. 

\begin{prop}\label{g3.6}
Let $k$ be an algebraically closed field of characteristic $l$. Let $M$ be a totally real field and let $\gothl$ be a prime in $M$ over $l$. Suppose that $(\calG,\lambda)$ is a divisible $\calO_{M,\gothl}$-module over $k$ of height $2[M_\gothl:\QQ_l]$ equipped with an $\calO_{M,\gothl}$-linear symmetric isomorphism $\lambda:\calG\cong\calG^\vee\otimes\calD_{\calO_{M,\gothl}/\ZZ_l}$.

Then there exists an $M$-HBAV $(A,\iota,j)$ over $k$ with $\calD_M^{-1}$-polarization and an isomorphism $i:A[\gothl^\infty]\cong\calG$ compatible with $\calO_{M,\gothl}$-actions on both sides such that $i^\vee\circ\lambda\circ i=j(1)$.

Moreover, if $\calG$ is ordinary, then $A$ can be chosen to be ordinary.
\end{prop}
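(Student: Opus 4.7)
The plan is to adapt the proof of \cite[Proposition~3.6]{AN19}, tracking ordinariness at each step. I would first produce an auxiliary $M$-HBAV $(A_0,\iota_0,j_0)$ over $k$ with $\calD_M^{-1}$-polarization. Such an $A_0$ exists because the Hilbert modular moduli stack is nonempty over $\overline{\FF}_l$ (for example, one may reduce a CM HBAV from characteristic zero). Moreover, by the classical nonemptiness of Newton strata in the HBAV moduli over $\overline{\FF}_l$, I may arrange that $A_0[\gothl^\infty]$ lies in the same isogeny class as $\calG$ as a polarized divisible $\calO_{M,\gothl}$-module over $k$.

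Next, Dieudonn\'e theory over the algebraically closed field $k$ provides, from the matching of isogeny classes, a polarization-compatible and $\calO_{M,\gothl}$-linear quasi-isogeny $\rho: A_0[\gothl^\infty] \to \calG$. I would then define $A$ to be the quotient of $A_0$ by the $\calO_M$-stable finite flat closed subgroup scheme of $A_0$ corresponding (after clearing denominators in $\rho$) to $\ker(\rho)$. The $\calO_M$-action and the polarization transport along this quotient, producing an $M$-HBAV $(A,\iota,j)$ with the desired isomorphism $i: A[\gothl^\infty]\cong\calG$ and the required compatibility $i^\vee \circ \lambda \circ i = j(1)$. For the ordinary refinement, I would simply choose $A_0$ to be ordinary at the outset, which is possible since the ordinary locus in the HBAV moduli over $\overline{\FF}_l$ is open and dense; since isogenies preserve the Newton polygon, the resulting $A$ remains ordinary.

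The main obstacle is verifying that the polarization descending to $A$ is of the precise type $\calD_M^{-1}$ rather than a twisted $\gothc$-polarization for some other fractional ideal $\gothc$. Ensuring $\gothc = \calD_M^{-1}$ requires $\ker(\rho)$ to be self-dual with respect to the polarization pairing, which one can arrange by a careful choice of $\rho$ within its polarization-compatible equivalence class. Carrying this out rigorously at the level of Dieudonn\'e modules, and compatibly with the $\calO_M$-structure, is where most of the technical bookkeeping will lie.
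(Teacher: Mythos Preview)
Your proposal is correct and follows essentially the same strategy as the paper. The paper's own proof is briefer: it cites \cite[Proposition~3.6]{AN19} outright for the non-ordinary statement, then notes that in that construction one passes through an auxiliary HBAV $A_0$ and forms $\calG_l=\calG\times\prod_{\gothl'\ne\gothl,\,\gothl'\mid l}A_0[\gothl'^\infty]$, so that demanding each $A_0[\gothl'^\infty]$ (for $\gothl'\ne\gothl$) be ordinary forces the output $A$ to be ordinary; your plan to pick $A_0$ in the ordinary locus and appeal to invariance of the Newton polygon under isogeny is the same observation, just packaged slightly differently.
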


\begin{proof}
We hope to adjust the polarization so that we can apply \cite[Proposition~3.6]{AN19}. Since the isomorphism class of the polarization only depends on its image in the strict class group, we may take a totally positive element $r\in\calO_M^+$ such that for any $\gothl'\mid l$, $r$ has the same $\gothl'$-adic valuation as $\calD_M$. Then for each $\gothl'\mid l$, multiplication by $r^{-1}$ induces an isomorphism $\calD_{\calO_{M,\gothl'}/\ZZ_l}\to\calO_{M,\gothl'}$. In particular, $\lambda'=\lambda\otimes r^{-1}$ would be an isomorphism $\calG\to\calG^\vee$.

Let $\gothc=r\calD_M^{-1}$. By our choice of $r$, $\gothc$ is a fractional ideal of $M$ that contains $\calO_M$ and is coprime to $l$. Applying \cite[Proposition~3.6]{AN19}, we get an $M$-HBAV with $\gothc$-polarization $(A,\iota,j')$ and an $\calO_{M,\gothl}$-equivariant isomorphism $i:A[\gothl^\infty]\cong\calG$ such that $i^\vee\circ\lambda'\circ i=j'(1)$. Here $j':\gothc\to\calM_A$ gives the $\gothc$-polarization, and hence $j=j'\circ r:\calD_M^{-1}\to\calM_A$ is the desired $\calD_M^{-1}$-polarization. We can check that $j(1)=i^\vee\circ\lambda'\circ i\circ r=i^\vee\circ(\lambda'\circ r)\circ i=i^\vee\circ\lambda\circ i$ since $i$ is $\calO_{M,\gothl}$-equivariant.

It remains to show the ordinary statement. In the proof of \cite[Proposition~3.6]{AN19} they obtain a $\calD_M^{-1}$-polarized $M$-HBAV $A_0$ such that $A_0[\gothl^\infty]\cong\calG$, and in order to apply Dieudonn\'e theory to $A_0[l^\infty]$, they set $\calG_l=\calG\times\prod_{\gothl'\ne\gothl, \gothl'\mid l}A_0[\gothl'^\infty]$. Now since $\calG$ is ordinary, we have that $A_0[\gothl^\infty]$ is ordinary, and we demand $A_0[\gothl'^\infty]$ to be ordinary for all $\gothl'\mid l$, $\gothl'\ne\gothl$. This will yield an ordinary $M$-HBAV $A$ with desired properties.
\end{proof}

We then state our potential automorphy theorem, which is an ordinary version of \cite[Theorem~3.9]{AN19}.

\begin{theorem}\label{g3.9}
Suppose that $F$ is a CM field, $l$ is an odd prime and $k/\FF_l$ finite. Let $\calO$ be a discrete valuation ring finite over $W(k)$ with residue field $k$. Let $\overline{\rho}:G_F\to\GL_2(k)$ be a continuous absolutely irreducible representation such that 
\begin{itemize}
    \item $\det(\overline{\rho})=\overline{\epsilon}_l^{-1}$;
    \item for each $v\mid l$, $\overline{\rho}|_{G_{F_v}}$ admits a crystalline lift $\rho_v:G_{F_v}\to\GL_2(\calO)$ with all labelled Hodge–Tate weights equal to $\{0, 1\}$.
\end{itemize}

Suppose moreover that $F^\text{avoid}/F$ is a finite extension. Then we can find
\begin{itemize}
    \item a finite CM extension $F_1/F$ that is linearly disjoint from $F^\text{avoid}$ over $F$, such that if $v\mid l$ in $F$, then $v$ is unramified in $F_1$;
    \item a regular algebraic cuspidal automorphic representation $\pi$ for $GL_2(\AAA_{F_1})$ of weight $0$, unramified at places above $l$;
    \item an isomorphism $\iota:\overline{\QQ}_l\xrightarrow{\sim}\CC$
    such that (composing $\overline{\rho}$ with some embedding $k\hookrightarrow\overline{\FF}_l$)
    $$\overline{r}_\iota(\pi)\cong\overline{\rho}|_{G_{F_1}}.$$
\end{itemize}

Moreover, we can ensure the following:
\begin{enumerate}
    \item If $\overline{v}_0\nmid l$ is a finite place of $F^+$, then we can moreover find $F_1$ and $\pi$ as above with $\pi$ unramified above $\overline{v}_0$.
    \item If for each $v\mid l$, $\overline{\rho}^\vee|_{G_{F_v}}$ is good ordinary, then $\pi$ is furthermore $\iota$-ordinary.
\end{enumerate}

\end{theorem}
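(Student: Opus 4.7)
The plan is to mirror the strategy of \cite[Theorem~3.9]{AN19}, with Lemma~\ref{g3.8} and Proposition~\ref{g3.6} supplying the ordinary-compatible local building blocks.

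First I would dualise in order to match conventions. The hypothesis $\det\overline{\rho}=\overline{\epsilon}_l^{-1}$ together with the existence, for each $v\mid l$, of a crystalline lift $\rho_v$ of $\overline{\rho}|_{G_{F_v}}$ with labelled Hodge--Tate weights $\{0,1\}$ places $\overline{\rho}^\vee|_{G_{F_v}}$ exactly in the setup of Lemma~\ref{g3.8}: the dual lift $\rho_v^\vee$ is crystalline with determinant $\epsilon_l$ and labelled Hodge--Tate weights $\{-1,0\}$. Applying Lemma~\ref{g3.8} at each $v\mid l$, I obtain a divisible $\calO$-module $\calG_v$ over $\calO_{F_v}$ of height $2[\calO:\ZZ_l]$ with a symplectic self-isomorphism $\lambda_v$ and an identification $i_v:V_{\overline{\rho}^\vee|_{G_{F_v}}}\cong\calG_v[\gothl]_{F_v}$ compatible with the standard pairing. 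In the good ordinary case, these $\calG_v$ are moreover ordinary.

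Next I would pick a totally real auxiliary field $M$ of the right degree and appeal to Proposition~\ref{g3.6} to realise the special fibre of each $\calG_v$ as the $\gothl$-divisible piece of a $\calD_M^{-1}$-polarised $M$-HBAV over the corresponding residue field, again ordinary in the good ordinary case. From here, the remainder of the Allen--Newton argument applies essentially verbatim: a Moret--Bailly style globalisation, as in \cite[Theorem~3.9]{AN19}, produces a finite CM extension $F_1/F$ linearly disjoint from $F^{\text{avoid}}$ in which every $v\mid l$ is unramified, together with a $\calD_M^{-1}$-polarised $M$-HBAV $A/F_1$ whose reduction at each place above $l$ matches the chosen $\calG_v$ and whose $\gothl$-torsion recovers $\overline{\rho}^\vee|_{G_{F_1}}$. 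Clause~(1) is arranged by prescribing the local behaviour above $\overline{v}_0$ in the same Moret--Bailly step. Modularity of $A$, used as in \cite{AN19} via the HBAV automorphy results underlying \cite{ACC+18}, then produces a regular algebraic cuspidal automorphic representation $\pi$ of $\GL_2(\AAA_{F_1})$ of weight $0$, unramified above $l$, with $\overline{r}_\iota(\pi)\cong\overline{\rho}|_{G_{F_1}}$ after dualising and choosing $\iota$.

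The main obstacle is the ordinary clause~(2). One has to verify that the ordinary reduction of $A$ above $l$, which is what falls out of the Moret--Bailly step in the good ordinary case, really translates into $\iota$-ordinariness of $\pi$ in the precise sense of the notation set up at the end of the introduction. Concretely, since $r_\iota(\pi)$ is cut out of $H^1$ of $A$ and $A$ has good ordinary reduction at each $w\mid l$ of $F_1$, the connected-\'etale sequence of $A[l^\infty]_w$ yields a Galois-stable filtration whose graded pieces are an unramified character and an unramified twist of $\epsilon_l$; matching this with the definition of ``ordinary of weight~$0$'' under our Hodge--Tate normalisation is the bookkeeping step that ties Proposition~\ref{g3.6} to the theorem. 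Checking this matching, together with ensuring that the Moret--Bailly construction can simultaneously enforce the linear disjointness from $F^{\text{avoid}}$, the unramifiedness above $\overline{v}_0$, and the prescribed ordinary local reduction type at $l$, is where the real work lies.
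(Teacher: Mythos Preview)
Your overall strategy matches the paper's: dualise, feed Lemma~\ref{g3.8} and Proposition~\ref{g3.6} into the Moret--Bailly argument of \cite[Theorem~3.9]{AN19} to supply the local data at $v\mid l$ (now also in the ordinary case), and then invoke automorphy. One clarification is in order: what you call ``the HBAV automorphy results underlying \cite{ACC+18}'' is really the auxiliary-prime mechanism already present in \cite{AN19}. The moduli problem there carries a second level structure $\alpha_E:A[\gothp]\to E[p]$ for a fixed non-CM elliptic curve $E$ and an auxiliary prime $p\ne l$; potential modularity of $E$ (\cite[Theorem~7.2.4]{ACC+18}) supplies residual automorphy at $p$, and then the Fontaine--Laffaille automorphy lifting theorem \cite[Theorem~6.1.1]{ACC+18} is applied to the $p$-adic representation $r_{A,\gothp}^\vee$; one reaches $\gothl$ only afterwards via the compatible system attached to $A$. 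Since you explicitly defer to \cite{AN19} this is not a gap in your outline, but your summary misdescribes the input.

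There is, however, a genuine gap in your argument for clause~(2). The connected--\'etale sequence of $A[l^\infty]$ at $w\mid l$ shows that $r_{\iota_l}(\pi)|_{G_{F_{1,w}}}$ is ordinary of weight $0$ in the Galois-theoretic sense of the Notations section, but the theorem asks for $\pi$ to be $\iota$-ordinary, which is a condition on Hecke eigenvalues at $w$. Passing from one to the other would require local-global compatibility at $w\mid l$, and that is precisely what is not available a priori here; this is not mere ``bookkeeping'' with Hodge--Tate normalisations. The paper circumvents this by again using the auxiliary prime: since $A$ has good ordinary reduction at $w\mid l$, the linear coefficient of the characteristic polynomial of $\Frob_w$ acting on $r_{A,\gothp}$ is an $l$-adic unit; as $r_{A,\gothp}^\vee\cong r_{\iota_p}(\pi)$ and $\pi$ is unramified at $w\nmid p$, known unramified local-global compatibility identifies this with the Hecke eigenvalue at $w$, and its being an $l$-adic unit is exactly $\iota_l$-ordinariness.
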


\begin{proof}
Choose a totally real field $M$ and a prime $\gothl$ of $M$ over $l$, such that $\calO_{M,\gothl}\cong\calO$ and the residue field $\calO_{M,\gothl}/\gothl\calO_{M,\gothl}=k_\gothl$ is isomorphic to $k$. Fix an isomorphism $k\cong k_\gothl$ and view $\overline{\rho}$ as a representation over $k_\gothl$.

Choose a non-CM elliptic curve $E/\QQ$ with good reduction at $l$ and the rational prime $q$ under $\overline{v}_0$. Choose a rational prime $p\ne l$ such that
\begin{itemize}
    \item $p>5$ splits completely in $FM$,
    \item $\SL_2(\FF_p)\subset\overline{r}_{E,p}(G_F)$, $E$ has good reduction at $p$ and $\overline{\rho}$ is unramified at places over $p$.
\end{itemize}
There are a positive density of primes satisfying the first condition by the Chebotarev density theorem, and all but finitely many primes satisfy the second condition, so such $p$ exists. We fix a prime $\gothp$ of $M$ over $p$.

Let $V_{\overline{\rho}}^\vee$ be the $k_\gothl$-vector space scheme over $F$ underlying $\overline{\rho}^\vee$ and fix the standard symplectic pairing on it, which by assumption is preserved by $\overline{\rho}^\vee$ up to multiplier $\overline{\epsilon}_l$. Also we have the $k_\gothp$-vector space scheme $E[p]\cong(E\otimes_\ZZ\calO_M)[\gothp]$ over $F$, equipped with the Weil pairing.

Let $\calD^{-1}=\calD^{-1}_M$ be the inverse different of $M$. We consider the scheme $Y$ over $F$ classifying tuples $(A,j,\alpha_{\overline{\rho}},\alpha_E)$ where
\begin{itemize}
    \item $A$ is an $M$-HBAV with $\calD^{-1}$-polarization $j$,
    \item $\alpha_{\overline{\rho}}:A[\gothl]\to V_{\overline{\rho}}^\vee$ and $\alpha_E:A[\gothp]\to E[p]$ are isomorphisms of vector space schemes compatible with the fixed symplectic pairings on the right hand sides and with the pairings $A[\gothl]\times A[\gothl]\to(\calO_M/\gothl)(1)$ and $A[\gothp]\times A[\gothp]\to(\calO_M/\gothp)(1)$ on the left hand sides.
\end{itemize}
We know by \cite{Tay06} that $Y/F$ is a smooth and geometrically connected variety. Let $X=\Res_{F/F^+}Y$. Then $X/F^+$ is also smooth and geometrically connected.

Applying \cite[Theorem~7.2.4]{ACC+18} (Taylor's potential modularity of elliptic curves) with $\calL=\{l,p\}$ and $L_1^\text{avoid}$ the normal closure of $F^\text{avoid}\overline{F}^{\ker(\overline{\rho}\times\overline{r}_{E,p})}$ over $\QQ$, we get a finite Galois extension $L_2^\text{avoid}/\QQ$ linearly disjoint from $L_1^\text{avoid}$ over $\QQ$ and a finite totally real Galois extension $L^\text{suffices}/\QQ$ unramified above $p$ and $l$ and linearly disjoint from $L_1^\text{avoid}L_2^\text{avoid}$ over $\QQ$, such that for any finite totally real extension $L_2/L^\text{suffices}$ which is linearly disjoint from $L_2^\text{avoid}$ over $\QQ$, there is a regular algebraic cuspidal automorphic representation $\sigma$ of $\GL_2(\AAA_{L_2})$ of weight $0$ such that for any rational prime $p'$ and any isomorphism $\iota_{p'}:\overline{\QQ}_{p'}\cong\CC$, we have $r_{\iota_{p'}}(\sigma)\cong r_{E,p'}^\vee|_{G_{L_2}}$. 

We then apply a theorem of Moret-Bailly in the form stated in \cite[Theorem~3.1]{AN19} to $X$ with
\begin{itemize}
    \item $L=F^+$, $S_1=\{\overline{v}\mid lp\}$, $S_2=\{\overline{v}_0\}$, $L^\text{avoid}=L_1^\text{avoid}L_2^\text{avoid}L^\text{suffices}$,
    \item for $\overline{v}\mid lp$, $\Omega_{\overline{v}}\subset X((F^+_{\overline{v}})^\text{nr})=\prod_{v\mid\overline{v}}Y(F_v^\text{nr})$ is the subset given by abelian varieties $A$ with good reduction at $v$, and furthermore with ordinary reduction at $v$ if $\overline{\rho}^\vee|_{G_{F_v}}$ is good ordinary,
    \item $\Omega_{\overline{v}_0}\subset X(\overline{F^+_{\overline{v}_0}})=\prod_{v_0\mid\overline{v}_0}Y(\overline{F}_{v_0})$ is the subset given by abelian varieties $A$ with good reduction at $v_0$.
\end{itemize}

We need to check that all the assumptions of Moret--Bailly. Real places case is trivial since for each real place $\overline{v}$ of $F^+$, $v$ is the unique complex place of $F$ extending $\overline{v}$, and $X(F^+_{\overline{v}})=Y(F_v)=Y(\CC)$ is clearly non-empty.

For $v\mid p$ in $F$, the subset $\Omega_{\overline{v}}$ is open because having good reduction at $v$ is an open condition and it is Galois invariant because the Galois conjugate of an abelian variety with good reduction at $v$ still has good reduction at $v$, so we only need to show that the subset is nonempty. The two representations $\overline{\rho}$ and $\overline{r}_{E,l}$ over $k_\gothl$ which are unramified at $v$ can be trivialized by passing to some power. More precisely, we choose a positive integer $f$ such that $\overline{\rho}(\Frob_v)^{-f}$ and $\overline{r}_{E,l}(\Frob_v)^f$ are trivial. Now let $A$ be the base change of $E\otimes\calO_M$ to the unramified degree $f$ extension of $F_v$, and let $j$ be induced by the Weil pairing on $E$. By construction of $A$ we know that $j$ is a $\calD^{-1}$-polarization. We have isomorphisms over this extension of $F_v$, $\alpha_{\overline{\rho}}:A[\gothl]\to V_{\overline{\rho}}^\vee$ since both representations are trivial and $\alpha_E:A[\gothp]\to E[p]$ since $p$ splits completely in $M$. It is easy to check compatibility with pairings on both sides. A similar argument shows that $\Omega_{\overline{v}_0}$ is nonempty.

The only thing new here is to check that $\Omega_{\overline{v}}$ is nonempty when $v\mid l$. Applying Lemma \ref{g3.8} to $\overline{\rho}^\vee|_{G_{F_v}}$, we get a divisible $\calO$-module $\calG$ over $\calO_{F_v}$ with an isomorphism of $l$-divisible groups $\lambda:\calG\cong\calG^\vee\otimes\calD_{\calO/\ZZ_l}$ and an isomorphism of finite flat group schemes $i:V_{\overline{\rho}}^\vee\cong\calG[\gothl]_{F_v}$ such that $\lambda$ induces the standard symplectic pairing on $V_{\overline{\rho}}^\vee$. Moreover, $\calG$ is ordinary if $\overline{\rho}^\vee|_{G_{F_v}}$ is good ordinary. Now consider an integral model $\calY/\calO_{F_v}$ of $Y_{F_v}$ classifying tuples $(A,j,\alpha_{\overline{\rho}},\alpha_E)$, where 
\begin{itemize}
    \item $A/S$ ($S$ an $\calO_{F_v}$-scheme) is an $M$-HBAV with $\calD^{-1}$-polarization $j$,
    \item $\alpha_{\overline{\rho}}:A[\gothl]\to\calG[\gothl]$ is an isomorphism of vector space schemes, compatible with the isomorphisms $A[\gothl]\cong A[\gothl]^\vee\otimes\calD_{\calO/\ZZ_l}$ induced by $j$ and $\calG[\gothl]\cong\calG[\gothl]^\vee\otimes\calD_{\calO/\ZZ_l}$ induced by $\lambda$,
    \item $\alpha_E:A[\gothp]\to E[p]$ is an isomorphism of vector space schemes, compatible with the isomorphisms $A[\gothp]\cong A[\gothp]^\vee$ induced by $j$ ($\calD_{\calO_{M,\gothp}/\ZZ_p}$ is trivial since $p$ splits in $M$) and $E[p]\cong E[p]^\vee$ ($E$ has good reduction at $l$ so $E[p]$ extends to a vector space scheme over $\calO_{F_v}$ with the above canonical isomorphism).
\end{itemize}
We need to show that $\calY(\calO_{F_v}^\text{nr})$ is nonempty. By Greenberg's approximation theorem \cite[Corollary~2]{Gre66}, it suffices to show that $\calY(\check{\calO})$ is nonempty, where $\check{\calO}$ is the $l$-adic completion of $\calO_{F_v}^\text{nr}$. Applying Proposition \ref{g3.6} to $\calG_{k_v}$, where $k_v=\calO_{F_v}^\text{nr}/v$ is an algebraically closed field, we get an $M$-HBAV $A_1/k_v$ with $\calD^{-1}$-polarization $j_1$ such that $A_1[\gothl^\infty]\cong\calG_{k_v}$ and $j_1(1)$ coincides with $\lambda_{k_v}:\calG_{k_v}\cong\calG_{k_v}^\vee\otimes\calD_{\calO_{M,\gothl}/\ZZ_l}$ under this isomorphism. Moreover, $A_1$ is ordinary if $\calG_{k_v}$ is ordinary. By Serre--Tate deformation theory, (since naturally we can define $\calG$ over $\check{\calO}$) we can lift $A_1$ to $\widetilde{A}_1$ over $\check{\calO}$ with $\calD^{-1}$-polarization $\widetilde{j}_1$ such that $\widetilde{A}_1[\gothl^\infty]\cong\calG_{\check{\calO}}$ and $\widetilde{j}_1(1)$ coincides with $\lambda_{\check{\calO}}:\calG_{\check{\calO}}\cong \calG_{\check{\calO}}^\vee\otimes\calD_{\calO_{M,\gothl}/\ZZ_l}$ under this isomorphism. By taking $\gothl$-torsion we get $\alpha_{\overline{\rho}}: \widetilde{A}_1[\gothl]\cong\calG_{\check{\calO}}[\gothl]$ and this isomorphism is compatible with the induced pairings on both sides. Finally we let $\alpha_E:\widetilde{A}_1[\gothp]\cong E[p]$ be the isomorphism between two trivial vector space schemes compatible with the pairings on both sides.

Now we checked the assumptions of Moret--Bailly, and hence obtain a finite Galois totally real extension $F_0^+/F^+$ and a point $(A,j,\alpha_{\overline{\rho}},\alpha_E)$ of $X(F_0^+)$ such that
\begin{itemize}
    \item $F_0^+/F^+$ is linearly disjoint from $L_1^\text{avoid}L_2^\text{avoid}L^\text{suffices}$ over $F^+$,
    \item $l$ and $p$ are unramified in $F_0^+$,
    \item $A$ has good reduction above $\overline{v}_0lp$,
    \item if for each $v\mid l$, $\overline{\rho}^\vee|_{G_{F_v}}$ is good ordinary, then $A$ has ordinary reduction at $v$.
\end{itemize}

Set $F_1=F_0^+L^\text{suffices}F$. Then $F_1/F$ is a CM extension unramified above $p$ and $v\mid l$. By \cite[Theorem~3.2]{AN19}, since $F_1^+/L^\text{suffices}$ is a finite totally real extension that is linearly disjoint from $L_2^\text{avoid}$ over $\QQ$, there exists a regular algebraic conjugate self-dual cuspidal automorphic representation $\sigma$ of $\GL_2(\AAA_{F_1})$ of weight 0 (initially of $\GL_2(\AAA_{F_1^+})$, and it extends by solvable base change), such that $\overline{r}_{\iota_p}(\sigma)\cong\overline{r}_{E,p}^\vee|_{G_{F_1}}$ for any $\iota_p:\overline{\QQ}_p\cong\CC$. Moreover $\sigma$ is unramified above $\overline{v}_0lp$ (since $E$ has good reduction above $\overline{v}_0lp$). Since $F_1$ is linearly disjoint from $L_1^\text{avoid}$ over $F$, $\SL_2(\FF_p)\subset\overline{r}_{E,p}(G_{F_1})$ (since $p$ is chosen to satisfy $\SL_2(\FF_p)\subset\overline{r}_{E,p}(G_F)$).

Now we fix a choice of $\iota_p$ and apply the automorphy lifting theorem \cite[Theorem~6.1.1]{ACC+18} to $\rho_0=r_{A,\gothp}^\vee$ (so $\overline{\rho}_0^\vee=A[\gothp]=E[p]$). Then there is a regular algebraic cuspidal automorphic representation $\pi$ of $\GL_2(\AAA_{F_1})$ of weight 0, unramified above $\overline{v}_0lp$, such that $r_{A,\gothp}^\vee\cong r_{\iota_p}(\pi)$. Since the representations $\{r_{A,\gothq}\}_\gothq$ ($\gothq$ ranges over places of $M$) coming from an abelian variety form a compatible system, if we fix $\iota:\overline{\QQ}_l\cong\CC$ we have $r_{A,\gothl}^\vee\cong r_{\iota}(\pi)$, and hence $\overline{r}_{A,\gothl}^\vee\cong\overline{r}_{\iota}(\pi)$. This finishes the proof since we have $\alpha_{\overline{\rho}}:A[\gothl]\cong V_{\overline{\rho}}^\vee$.


Finally we use the Hecke eigenvalue description to show that the automorphic representation $\pi$ is $\iota$-ordinary: say $A$ has ordinary reduction at some $v\mid l$, and the characteristic polynomial of $r_{A,\gothp}^\vee(\Frob_v)$ (which has coefficients in $\calO_M$) has roots $\alpha$, $\beta$, then by \cite[\S2 and \S7]{Del69} we know that one of $\iota^{-1}(\alpha)$, $\iota^{-1}(\beta)$ is an $l$-adic unit. By taking an appropriate $v$-stabilization, we can find a $U_v$-eigenvector in the Iwahori invariants of $\iota^{-1}\pi_v$ with eigenvalue being an $l$-adic unit, and hence by \cite[Definition~4.1.2]{Ger10} we know that $\pi$ is $\iota$-ordinary at $v$.
\end{proof}

\section{Local-Global Compatibility}
We first prove the following property of $\iota$-ordinariness.

\begin{theorem}\label{g2.8}
Let $F$ be a CM field and let $\pi$ be a regular algebraic cuspidal automorphic representation of $\GL_2(\AAA_F)$ of weight $\iota\lambda$, where $\iota:\overline{\QQ}_l\to\CC$ is a fixed isomorphism. Assume that 
\begin{itemize}
    \item $\overline{r}_\iota(\pi)$ is absolutely irreducible and decomposed generic,
    \item $\pi$ is $\iota$-ordinary.
\end{itemize}
Then $r_\iota(\pi)$ is ordinary of weight $\lambda$.
\end{theorem}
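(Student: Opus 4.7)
The plan is to realize $r_\iota(\pi)$ as the specialization of a big Galois representation valued in an ordinary Hida-type Hecke algebra, and to read off the ordinary structure at places above $l$ from this family. Since $\overline{r}_\iota(\pi)$ is absolutely irreducible and decomposed generic, the vanishing results of Caraiani--Scholze (as deployed in \cite{ACC+18}) ensure that the completed cohomology of the arithmetic locally symmetric spaces attached to $\GL_2/F$, localized at the maximal ideal $\mathfrak{m}$ of the spherical Hecke algebra corresponding to $\overline{r}_\iota(\pi)$, is concentrated in a single degree after truncation. This is precisely the setting in which the construction of $r_\iota(\pi)$ out of completed cohomology is well-behaved.

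Using the $\iota$-ordinary hypothesis, I would first show that $\pi$ contributes to the ordinary direct summand of completed cohomology cut out by Hida's ordinary projector for the $U_v$-operators at each $v\mid l$, so that the Hecke eigensystem of $\pi$ factors through the big ordinary Hecke algebra $T^{\mathrm{ord}}_\mathfrak{m}$. The construction of Galois representations in \cite{ACC+18} yields a Galois pseudorepresentation valued in $T^{\mathrm{ord}}_\mathfrak{m}$; the absolute irreducibility of $\overline{r}_\iota(\pi)$ lifts this to a genuine representation $\rho^{\mathrm{big}}: G_F\to \GL_2(T^{\mathrm{ord}}_\mathfrak{m})$ whose specialization at the system of Hecke eigenvalues of $\pi$ recovers $r_\iota(\pi)$. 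The core structural output of the ordinary completed cohomology machinery is that $\rho^{\mathrm{big}}|_{G_{F_v}}$ is upper triangular for each $v\mid l$, with an unramified quotient character sending a Frobenius lift to the corresponding $U_v$-eigenvalue. Specializing at $\pi$ then transports this upper triangular structure to $r_\iota(\pi)|_{G_{F_v}}$.

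The final step is to match the restriction of the diagonal characters to the inertia subgroup of $G_{F_v}$ with the formulas in the definition of ``ordinary of weight $\lambda$'', using $\det r_\iota(\pi)$ (a product of a power of $\epsilon_l$ and a finite-order character determined by $\lambda$), the normalization of the $\iota$-ordinary condition on Hecke eigenvalues, and local class field theory applied to $\calO_{F_v}^\times$. The main obstacle is the construction of $\rho^{\mathrm{big}}$ with the correct upper triangular shape at $v\mid l$ — this rests on the full ordinary completed cohomology package of \cite{ACC+18}, which generalizes classical Hida theory to the non-Shimura setting of $\GL_2$ over CM fields; once that is in hand, matching the inertia characters is a bookkeeping exercise in $\iota$-normalizations, reading off the $\psi_i$ from the $U_v$-eigenvalue and the determinant computation directly.
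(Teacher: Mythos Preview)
Your overall strategy matches the paper's: invoke the ordinary Hecke-algebra-valued Galois representation of \cite[Theorem~5.5.1]{ACC+18} attached to the maximal ideal $\mathfrak{m}$ of $\overline{r}_\iota(\pi)$, and then specialize at the eigensystem of $\pi$. The difference lies in what you claim that theorem gives you at places $v\mid l$.

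You assert that the big representation $\rho^{\mathrm{big}}|_{G_{F_v}}$ is already upper triangular over $T^{\mathrm{ord}}_\mathfrak{m}$, with an unramified quotient character. This is stronger than what \cite{ACC+18} proves in the CM setting. What \cite[Theorem~5.5.1]{ACC+18} actually yields (after the solvable base change making $F=F^+F_0$ with $l$ split in $F_0$) is a pair of Hecke-algebra-valued characters $\chi_{\lambda,w,1},\chi_{\lambda,w,2}$ such that for all $g,g_1,g_2\in G_{F_w}$ the characteristic polynomial of $\rho_\mathfrak{m}(g)$ equals $(X-\chi_{\lambda,w,1}(g))(X-\chi_{\lambda,w,2}(g))$, and the ordered relation $(\rho_\mathfrak{m}(g_1)-\chi_{\lambda,w,1}(g_1))(\rho_\mathfrak{m}(g_2)-\chi_{\lambda,w,2}(g_2))=0$ holds. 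Over a possibly non-reduced Hecke algebra these relations do not by themselves produce a $G_{F_w}$-stable line, so ``specializing the upper triangular structure'' is not available.

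The paper closes this gap by first specializing to $\overline{\QQ}_l$ and then applying an elementary lemma (Lemma~\ref{BrauerNesbitt}): Brauer--Nesbitt shows $r_\iota(\pi)|_{G_{F_w}}$ has semisimplification $\psi_{\lambda,w,1}\oplus\psi_{\lambda,w,2}$ and is therefore upper triangular with \emph{some} ordering of the diagonal; the ordered product relation is then used to rule out the wrong ordering in the nonsplit case. This step is what actually produces the filtration with $\psi_1$ on the sub and $\psi_2$ on the quotient, which is the content of ``ordinary of weight $\lambda$''. Your sketch treats this as automatic, so the missing ingredient is precisely this Brauer--Nesbitt-plus-ordering argument after specialization. (Incidentally, the paper reads off both diagonal characters directly from the $\chi_{\lambda,w,i}$ and the finite-order diamond contribution, rather than computing one from the determinant as you propose; either bookkeeping route works once the filtration is in hand.)
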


\begin{proof}
Fix $w\mid l$. Replacing $F$ with a finite solvable extension in which $l$ splits, we may assume that $F=F^+F_0$ with $F^+\ne\QQ$ totally real and $F_0$ an imaginary quadratic field in which $l$ splits. By \cite[Theorem~5.5.1]{ACC+18}, we know that there exists a Hecke algebra-valued lift $\rho_\gothm$ of $\overline{r}_\iota(\pi)$ such that
\begin{enumerate}
    \item\label{charpoly1} For any $g\in G_{F_w}$, the characteristic polynomial of $\rho_\gothm(g)$ equals $(X-\chi_{\lambda,w,1}(g))(X-\chi_{\lambda,w,2}(g))$,
    \item\label{charpoly2} For any $g_1, g_2\in G_{F_w}$, $(\rho_\gothm(g_1)-\chi_{\lambda,w,1}(g_1))(\rho_\gothm(g_2)-\chi_{\lambda,w,2}(g_2))=0$,
\end{enumerate}
where $\chi_{\lambda,w,i}$ is a Hecke algebra-valued character such that 
$$\chi_{\lambda,w,i}(\Art_{F_w}(\sigma))=\prod_{\tau:F_w\hookrightarrow\overline{\QQ}_l}\tau(\sigma)^{-(\lambda_{\tau,n-i+1}+i-1)}\langle\diag(1,\ldots,\sigma,\ldots,1)\rangle$$
($\sigma$ in the $i$-th place) for $\sigma\in\calO_{F_w}^\times$. Passing to the Hecke eigenvalue on $\pi$, the image of the diamond operator $\langle\diag(1,\ldots,\sigma,\ldots,1)\rangle$ is of finite order, so we can choose an open subgroup of $\calO_{F_w}^\times$ such that it is trivialized. Therefore the $\overline{\QQ}_l$-valued representation $r_\iota(\pi)$ corresponding to $\rho_{\gothm}$ satisfies the properties that are analogous to conditions \ref{charpoly1} and \ref{charpoly2} above, with the characters $\chi_{\lambda,w,i}$ replaced by $\overline{\QQ}_l$-valued characters $\psi_{\lambda,w,i}$, such that
$$\psi_{\lambda,w,i}(\Art_{F_w}(\sigma))=\prod_{\tau:F_w\hookrightarrow\overline{\QQ}_l}\tau(\sigma)^{-(\lambda_{\tau,n-i+1}+i-1)}$$ for all $\sigma$ in some open subgroup of $\calO_{F_w}^\times$.
By Lemma~\ref{BrauerNesbitt} below we know that $r_\iota(\pi)|_{G_{F_w}}$ is conjugate to an upper triangular matrix with (ordered) diagonal entries $\psi_{\lambda,w,1},\psi_{\lambda,w,2}$. This implies the ordinariness of $r_\iota(\pi)$.
\end{proof}

\begin{lemma}\label{BrauerNesbitt}
Let $G$ be a group and let $K$ be an algebraically closed field of characteristic $0$. Fix two characters $\chi_1,\chi_2$. Suppose that $\rho:G\to\GL_2(K)$ is a representation satisfying
\begin{itemize}
    \item For any $g\in G$, the characteristic polynomial of $\rho(g)$ equals $(X-\chi_1(g))(X-\chi_2(g))$,
    \item For any $g_1, g_2\in G$, $(\rho(g_1)-\chi_1(g_1))(\rho(g_2)-\chi_2(g_2))=0$.
\end{itemize}
Then $\rho$ is conjugate to $\left(\begin{array}{cc}
    \chi_1 & \kappa \\
    0 & \chi_2
    \end{array}\right)$ for some $\kappa$.
\end{lemma}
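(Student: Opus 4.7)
The plan is to construct a one-dimensional $G$-stable subspace of $V = K^2$ on which $\rho$ acts by $\chi_1$, then extend to a basis to realize $\rho$ in the desired upper-triangular form. The candidate subspace is
$$W := \sum_{g \in G} \image\bigl(\rho(g) - \chi_2(g)\bigr) \subseteq V.$$
By the second hypothesis, $W \subseteq \bigcap_{g \in G} \ker\bigl(\rho(g) - \chi_1(g)\bigr)$, so once $W$ is known to be $G$-stable, $\rho$ will automatically act on $W$ by the character $\chi_1$.

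The only nontrivial verification is the $G$-invariance of $W$. For any $h, g \in G$ and $u \in V$, I will use multiplicativity of $\chi_2$ to rewrite
$$\rho(h)\bigl(\rho(g) - \chi_2(g)\bigr)u = \bigl(\rho(hg) - \chi_2(hg)\bigr)u - \chi_2(g)\bigl(\rho(h) - \chi_2(h)\bigr)u,$$
which displays $\rho(h)W \subseteq W$ term-by-term. With $W$ in hand, I split into cases on $\dim W$. In the main case $\dim W = 1$, extending any nonzero $w \in W$ to a basis of $V$ realizes $\rho$ as an upper-triangular matrix with top-left entry $\chi_1(g)$ (because $\rho(g)w = \chi_1(g)w$), and the characteristic polynomial hypothesis then forces the bottom-right entry to be $\chi_2(g)$, producing the desired $\kappa$ in the upper right. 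The degenerate cases $\dim W \in \{0,2\}$ force $\rho(g)$ to be the scalar $\chi_2(g)I$ or $\chi_1(g)I$ respectively, after which the characteristic polynomial identity forces $\chi_1 = \chi_2$ and $\rho$ is already in the required form.

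The only real obstacle is the $G$-stability computation for $W$, which is an immediate consequence of $\chi_2$ being a character; the rest is elementary linear algebra organized by the dimension of $W$. Note that neither $\operatorname{char} K = 0$ nor algebraic closure is actually used, so the conclusion is slightly more general than stated.
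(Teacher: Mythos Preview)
Your argument is correct and is genuinely different from the paper's. The paper first invokes the Brauer--Nesbitt theorem to conclude that $\rho$ is upper-triangular with $\{\chi_1,\chi_2\}$ on the diagonal in \emph{some} order, and then rules out the wrong order by a contradiction argument: assuming $\rho\sim\left(\begin{smallmatrix}\chi_2 & \kappa\\ 0 & \chi_1\end{smallmatrix}\right)$ with $\chi_1\neq\chi_2$ and $\rho$ nonsplit, it produces explicit $g_1,g_2$ violating the second hypothesis. By contrast, you bypass Brauer--Nesbitt entirely and build the $\chi_1$-line directly as $W=\sum_g\image(\rho(g)-\chi_2(g))$, using the second hypothesis to see $W$ lies in the common $\chi_1$-eigenspace and the multiplicativity of $\chi_2$ to see $W$ is $G$-stable. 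Your approach is more constructive and more elementary, and as you observe it shows that neither the characteristic-zero hypothesis nor algebraic closure is actually needed; the paper's route, while also fine, leans on Brauer--Nesbitt and a small case analysis on the image of $\rho$ that your method avoids.
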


\begin{proof}
The semisimplification of $\rho$ has the same characteristic polynomial as the semisimple representation $\chi_1\oplus\chi_2$. By the Brauer-Nesbitt theorem they are isomorphic, so $\rho$ is conjugate to an upper triangular matrix with (unordered) diagonal entries $\chi_1,\chi_2$. To show that they are put in the right order, suppose that $\rho$ is conjugate to $\left(\begin{array}{cc}
    \chi_2 & \kappa \\
    0 & \chi_1
    \end{array}\right)$.
If $\chi_1=\chi_2$ or $\rho$ is split then we are done. If $\chi_1\ne\chi_2$ and $\rho$ is nonsplit, then $H=\image(\rho)$ is nonabelian (otherwise under some basis every element of $H$ is diagonal). We can find $g_1,g_2\in G$ such that $(\rho(g_1)-\chi_1(g_1))(\rho(g_2)-\chi_2(g_2))$ is nonzero, which contradicts the second condition. Indeed, we choose $g_1$ such that $\chi_1(g_1)=\chi_2(g_1)=1$ and $\kappa(g_1)\ne0$, and choose $g_2$ such that $\chi_1(g_2)\ne\chi_2(g_2)$. Such $g_1$ exists since otherwise the intersection of $H$ and the unipotent subgroup $U$ of the upper triangular Borel $B$ is trivial and hence $H$ embeds into the quotient $B/U$, which is abelian. We can verify that the top right entry of the matrix $(\rho(g_1)-\chi_1(g_1))(\rho(g_2)-\chi_2(g_2))$ is nonzero.
\end{proof}

Now we are ready to prove our main result.

\begin{proof}[Proof of Theorem 1.3]
Fix a prime $p\ne l$ for which $\overline{r}_{\iota}(\pi)$ is decomposed generic. By the main result of \cite{Var14}, it suffices to show that if $v\nmid l$ is a finite place at which $\pi$ is special (i.e., $\pi_v$ is a twist of Steinberg), then $\WD(r_{\iota}(\pi)|_{G_{F_v}})$ has nontrivial monodromy. Let $N$ be the monodromy operator. To show that $N$ is nontrivial it suffices to do so after restricting to a finite extension. In particular, we may go to a solvable base change that is disjoint from $\overline{F}^{\ker(\overline{r}_{\iota}(\pi))}$ in which $p$ splits, and assume that 
\begin{itemize}
    \item $\pi_v$ is an unramified twist of Steinberg,
    \item $\overline{r}_{\iota}(\pi)$ is unramified at $v$ and $v^c$.
\end{itemize}

Now assume that $N=0$. By the main result of \cite{Var14} we have that $r_{\iota}(\pi)|_{G_{F_v}}\cong\chi\oplus\chi\epsilon_l$ for some unramified character $\chi:G_{F_v}\to\overline{\QQ}_l^\times$. We then apply Theorem \ref{g3.9} with $\overline{\rho}=\overline{r}_{\iota}(\pi)$ and $F^\text{avoid}$ equal to the Galois closure of $\overline{F}^{\ker(\overline{r}_{\iota}(\pi))}(\zeta_l)/\QQ$, to get a CM Galois extension $F_1/F$ linearly disjoint from $F^\text{avoid}$ over $F$, such that $\overline{r}_{\iota}(\pi)|_{G_{F_1}}\cong\overline{r}_{\iota}(\pi_1)$ for some weight 0 automorphic representation $\pi_1$ that is unramified above $v$ and $l$ and is $\iota$-ordinary. 

We wish to apply the automorphy lifting theorem \cite[Theorem~6.1.2]{ACC+18}. To do so, we check the following assumptions:
\begin{enumerate}
    \item $r_{\iota}(\pi)|_{G_{F_1}}$ is unramified almost everywhere. We know that $r_{\iota}(\pi)$ is unramified almost everywhere by the main result of \cite{HLTT16} and so is the restriction.
    \item For any $w_1\mid l$ in $F_1$, $r_{\iota}(\pi)|_{G_{F_{w_1}}}$ is ordinary (of some weight $\lambda$ if $\pi$ is of weight $\iota\lambda$). This is by Theorem \ref{g2.8}.
    \item $\overline{r}_{\iota}(\pi)|_{G_{F_1}}$ is absolutely irreducible (encoded in the definition of enormous image) and decomposed generic (by a similar argument in \cite[Theorem~4.1]{AN19}). The image of $\overline{r}_{\iota}(\pi)|_{G_{F_1(\zeta_l)}}$ is enormous (by our choice of $F^\text{avoid}$).
    \item There exists $\sigma\in G_{F_1}-G_{F_1(\zeta_l)}$ such that $r_{\iota}(\pi)(\sigma)$ is a scalar. This is still by our choice of $F^\text{avoid}$.
    \item There exists a regular algebraic cuspidal automorphic representation $\pi_1$ of $\GL_2(\AAA_{F_1})$ and an isomorphism $\iota:\overline{\QQ}_l\cong\CC$ such that $\pi_1$ is $\iota$-ordinary and $\overline{r}_{\iota}(\pi_1)\cong \overline{r}_{\iota}(\pi)|_{G_{F_1}}$. This is by Theorem \ref{g3.9}.
\end{enumerate}

The automorphy lifting theorem gives an $\iota$-ordinary cuspidal automorphic representation $\Pi$ of $\GL_2(\AAA_{F_1})$ such that $r_{\iota}(\Pi)\cong r_{\iota}(\pi)|_{G_{F_1}}$ and $\Pi_{v_1}$ is unramified at all $v_1\mid v$ in $F_1$. Then for any $v_1\mid v$ in $F_1$, $r_{\iota}(\Pi)|_{G_{F_{1,v_1}}}\cong \chi|_{G_{F_{1,v_1}}}\oplus\chi|_{G_{F_{1,v_1}}}\epsilon_l$, and $\Pi_{v_1}$ is an unramified principal series. By local-global compatibility at unramified places \cite{HLTT16} and \cite[Theorem~1]{Var14}, this contradicts the genericity of $\Pi$.
\end{proof}

\bibliographystyle{amsalpha}
\bibliography{bibtex}

\end{document}